\theoremstyle{plain}
\newtheorem{theorem}{Theorem}
\begin{document}
\title{A short proof of Hara and Nakai's theorem}
\author{Byung-Geun Oh}\thanks{This work was supported by the research fund of Hanyang University(HY-2007-000-0000-4844)}

\address{Department of Mathematics Education, Hanyang University, 17 Haengdang-dong, Seongdong-gu, Seoul 133-791, Korea}
\email{bgoh@hanyang.ac.kr}
\date{\today}
\subjclass[2000]{30H05, 30D55} \keywords{corona problem, bounded
analytic function}

\begin{abstract}
We give a short proof of the following theorem of Hara and Nakai: for a finitely bordered Riemann surface $R$, one can
find an upper bound of the corona constant of $R$ that depends only on the genus and the number
of boundary components of $R$.
\end{abstract}

\maketitle

\section*{The corona problem and Hara-Nakai's theorem}
For a given Riemann surface $R$, let $H^\infty (R)$ denote the
uniform algebra of bounded analytic functions on $R$. To avoid
pathological cases, we also assume that $H^\infty (R)$ separates
the points in $R$; i.e., for any $x_1, x_2 \in R$, $x_1 \ne x_2$, there
exists a function $f \in  H^\infty (R)$ with $f(x_1) \ne f(x_2)$.

We next consider the maximal ideal space $\mathcal{M}(R)$ of $H^\infty(R)$, and observe that
each $M \in \mathcal{M} (R)$ can be identified with $\varphi_M: H^\infty(R) \to \mathbb{C}$, where $\varphi_M$ is
the complex homomorphism that has $M$ as the kernel. This means that the maximal ideal space $\mathcal{M} (R)$ can be regarded
as a subspace of the dual space $(H^\infty(R))^*$ of $H^\infty(R)$. Moreover,
it also implies that we can equip $\mathcal{M} (R)$ with the Gelfand topology,
thus $\mathcal{M} (R)$ can be thought of a closed subspace of $(H^\infty(R))^*$  that
is contained in the unit sphere. For the details, see for example Chap.~V-1 of \cite{Gar}.

We know that each $\xi \in R$ corresponds to the maximal ideal
\[
 M_\xi = \{ f \in H^\infty (R) : f(\xi) = 0 \},
\]
hence $R$ can be naturally embedded into $\mathcal{M}(R)$ by the inclusion map $\iota : \xi \hookrightarrow M_\xi$.
Since we already provided $\mathcal{M}(R)$ with the Gelfand topology, one may ask the following:
``is $\iota (R)$ dense in $\mathcal{M} (R)$ with respect to the Gelfand topology?" This is a famous question that
is known as the \emph{corona problem}, and we will say that the \emph{corona theorem holds}
for $R$ if $\iota (R)$ dense in $\mathcal{M} (R)$. Otherwise
$R$ is said to have \emph{corona} ($= \mathcal{M}(R) \setminus \overline{\iota (R)}$).
Note that the complex homomorphism $\varphi_{M_\xi}$ associated with $M_\xi$ is nothing but the point evaluation map $\lambda_\xi : f \mapsto f(\xi)$.

It is known that the corona theorem holds for $R$ if and only if
the following function theoretical statement holds (cf. Chap.~4 of
\cite{Gam2}, Chap.~VIII of \cite{Gar}, or Chap.~12 of \cite{Du}):
for given $F_1, \ldots, F_n \in H^\infty (R)$ and $\delta \in
(0,1)$ such that
\begin{equation}\label{bound}
  \delta \leq \max_{1 \leq j \leq n} |F_j(\zeta)| \leq 1 \quad \mbox{for all } \zeta \in R,
\end{equation}
there exist $G_1, \ldots, G_n \in  H^\infty (R)$ that satisfy the
equation
\[
 F_1 G_1 + F_2 G_2 + \cdots + F_n G_n =1.
\]

We refer to $F_1, \ldots, F_n$ as \emph{corona data} of index $(n,
\delta)$ and $G_1, \ldots, G_n$ as \emph{corona solutions}
associated with the given corona data. The constant
\[
C(n,\delta,R) := \sup \inf \max \{ \|G_1\|_\infty, \ldots,
\| G_n \|_\infty \} 
\]
is called the ``corona constant" of $R$, where the supremum is
over all corona data satisfying \eqref{bound} and the infimum is
over all possible corona solutions associated with each corona
data. 

As usual, we interpret the infimum of an empty set as
infinity, thus if a Riemann surface $R$ has corona, the corona constant $C(n, \delta, R)$ must be infinite for some
index $(n, \delta)$. But what about the converse? If the corona theorem holds for $R$, is $C(n, \delta, R)$ finite for all 
indices $(n, \delta)$? The answer for this question is still unknown for general Riemann surfaces, 
but the answer is positive at least for finitely bordered Riemann surfaces, as the  
following theorem shows. 
 
\begin{theorem}[Hara and Nakai, \cite{HN}]\label{T}
For a given finitely bordered Riemann surface $R$, let $g(R)$ denote the genus of $R$ and 
$b(R)$ denote the number of boundary components of $R$. Then for each given index $(n, \delta)$ and numbers
$g \in \mathbb{N}\cup\{0\}$ and $b \in \mathbb{N}$, we have
\[
  \sup_{R \in \mathfrak{R}(g,b) } C(n, \delta, R) < \infty,
\]
where $\mathfrak{R}(g,b)$ is the collection of Riemann surfaces with $g(R) =g$ and $b(R) =b$.
\end{theorem}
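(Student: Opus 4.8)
The plan is to transfer the problem to the unit disk $\mathbb{D}$, where Carleson's corona theorem supplies a bound $C(n,\delta,\mathbb{D})$ that depends only on $n$ and $\delta$, and then to track how this bound degrades under the transfer. The bridge is a proper holomorphic map $\pi\colon R\to\mathbb{D}$ of finite degree $N$. To obtain one whose degree is controlled by the topology alone, I would pass to the Schottky double $\widehat{R}$, a compact Riemann surface of genus $2g+b-1$: a compact surface of genus $G$ carries a meromorphic function of degree at most $\lfloor (G+3)/2\rfloor$, so $\widehat{R}$ admits a map to the sphere of degree $N=N(g,b)$, and after symmetrizing across the ideal boundary this restricts to a proper map $\pi\colon R\to\mathbb{D}$ with $N$ depending only on $(g,b)$. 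Using the primitive-element theorem for the meromorphic function field of $R$, the same data realizes $R$ as an algebraic curve over the disk and exhibits $H^\infty(R)$ as a module of rank $N$ over $\pi^{*}H^\infty(\mathbb{D})\cong H^\infty(\mathbb{D})$.

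With this structure in place, the second step is to rewrite the scalar equation $F_1G_1+\cdots+F_nG_n=1$ on $R$ as a vector-valued (matrix) corona equation over $H^\infty(\mathbb{D})$. Expanding each datum $F_j$ in a fixed module basis $1,y,\dots,y^{N-1}$ with coefficients in $H^\infty(\mathbb{D})$, the pointwise hypothesis $\max_j|F_j(\zeta)|\ge\delta$ on $R$ becomes, fibre by fibre over $w\in\mathbb{D}$, a uniform lower bound for the symbol of an associated matrix-valued function on $\mathbb{D}$. I would then invoke the corona theorem on $\mathbb{D}$ in its matrix form, whose constant is controlled by the matrix size and by the lower bound, solve over $H^\infty(\mathbb{D})$, and read the solution back as functions $G_1,\dots,G_n\in H^\infty(R)$. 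The resulting bound on $\max_j\|G_j\|_\infty$ then depends only on $n$, $\delta$, the degree $N=N(g,b)$, and the norms of the structure data, namely the generator $y$ and the coefficients of its defining polynomial.

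The main obstacle I anticipate is uniformity over the family $\mathfrak{R}(g,b)$. Because this family is non-compact --- surfaces may degenerate by pinching a boundary curve or a handle --- one cannot extract a convergent subsequence and pass to a limit; the estimate must instead be built into the construction so that it survives degeneration. The covering degree $N$ is already uniform, being topological, but the module generator $y$, the coefficients of its defining polynomial, and hence the constant in the matrix corona estimate could in principle deteriorate as the conformal structure degenerates. Securing degeneration-uniform control of exactly these finitely many quantities is the crux of the argument. I would attempt it by normalizing both $\pi$ and the primitive element $y$ against the bounded geometry of the bordered surface, isolating the finite list of parameters on which the transferred constant genuinely depends and verifying that each remains bounded as $R$ ranges over $\mathfrak{R}(g,b)$; this last verification is where I expect the real work to lie.
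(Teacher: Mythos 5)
Your first step is sound---the existence of a proper holomorphic map $\pi\colon R\to\mathbb{D}$ whose degree is bounded by the topology is exactly Ahlfors' theorem ($b\le m\le 2g+b$), which the paper also uses---but the rest of the proposal contains a genuine gap, and it is precisely the step you yourself flag as ``where the real work lies.'' Reducing the corona problem on $R$ to a matrix corona problem on $\mathbb{D}$ via a primitive element $y$ is the classical quantitative strategy, and it founders on two concrete points. First, $H^\infty(R)$ is in general \emph{not} a free module with basis $1,y,\dots,y^{N-1}$ over $\pi^*H^\infty(\mathbb{D})$: extracting the coefficients of $F_j$ in that basis (by Cramer's rule over the fibers of $\pi$) divides by the discriminant of $y$, which vanishes at the branch points; one only gets bounded coefficients after multiplying through by the discriminant, and undoing that multiplication is itself a nontrivial division problem. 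Second, and decisively, the family $\mathfrak{R}(g,b)$ is non-compact, as you note: when a handle or boundary curve pinches, branch points of $\pi$ collide and the discriminant becomes uniformly small on large sets, so the coefficient norms and the fibrewise lower bound for the matrix symbol degenerate together. No normalization of $\pi$ and $y$ is exhibited that survives this, and compactness cannot supply one. As written, the proposal reduces the theorem to an unproven uniform-estimates claim of essentially the same depth as the theorem itself; this is in fact close to the route of Hara and Nakai's original paper, where such estimates occupy the bulk of the work.

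The paper's proof avoids all quantitative control, and it is worth seeing how, since it shows the uniformity you are struggling with can be made \emph{qualitative}. Suppose the conclusion fails: choose $R_j\in\mathfrak{R}(g,b)$ with $C(n_0,\delta_0,R_j)\to\infty$, take Ahlfors maps $h_j\colon R_j\to D_j$ onto pairwise disjoint round disks, and pass to a subsequence so that all degrees equal a single $m\le 2g+b$ (the only uniform datum ever needed, and it is topological). Then $h\colon\bigsqcup_j R_j\to\bigcup_j D_j$ is an $m$-sheeted branched cover of disconnected surfaces. Gamelin's lemma (Theorem~\ref{TTT}) says that $\sup_j C(n,\delta,\cdot)<\infty$ over a family is \emph{equivalent} to the corona theorem for the disjoint union; Carleson's theorem with bounds gives the corona theorem for $\bigcup_j D_j$, Nakai's transfer theorem (Theorem~\ref{TT}, valid for disconnected covers) carries it to $\bigsqcup_j R_j$, and this contradicts $\sup_j C(n_0,\delta_0,R_j)=\infty$ via Theorem~\ref{TTT} again. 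The disjoint-union trick thus converts the degeneration-uniform estimates you defer into a soft corona statement, and no control of primitive elements, discriminants, or module bases is required. To complete your proposal as stated, you would need to actually prove the degeneration-stable bounds on $y$ and its discriminant, which is the hard content of the original Hara--Nakai argument.
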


The purpose of our paper is to give a short proof for this theorem. Note that Theorem~\ref{T} implies that one can find
an upper bound of the corona constant of a finitely bordered Riemann surface only depending on the index, genus of $R$ and
the number of boundary components of $R$.
The case $g=0$ was proved by Gamelin in \cite{Gam}, and
the case $g=0$ and $b=1$ is nothing but the famous Carleson's
corona theorem for the unit disc \cite{Ca}.

There are various planar domains and Riemann surfaces for which the
corona theorem holds (\cite{Ca}, \cite{Gam}, \cite{GJ}, \cite{JM},
\cite{St}, \cite{Al}, \cite{Be1}, \cite{Be2}, and more). On the other
hand, relatively a small number of Riemann surfaces are known to have corona. The first such example was constructed by Cole
(Chap.~4 of \cite{Gam2}), which was recently reconstructed in a
simpler way in \cite{Oh}, and other Riemann surfaces that have corona can be found in \cite{BD} and \cite{Ha}. 

The corona problem for general planar domains is still open, and
the answer is also unknown for a polydisc or a unit ball in
$\mathbb{C}^n$, $n \geq 2$.


\section*{Proof of Theorem~\ref{T}}
Our proof is based on the following three theorems and the
Carleson's corona theorem for the unit disc.

\begin{theorem}[Mitsuru Nakai, \cite{Na}]\label{TT}
Let $R$ and $R'$ be Riemann surfaces and $f: R' \to R$ an
$m$-sheeted branched covering map for some $m < \infty$. Then the
corona theorem holds for $R'$ if and only if it holds for $R$.
\end{theorem}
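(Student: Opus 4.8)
The plan is to prove both implications by transferring corona data and corona solutions across the covering map $f$, using pullback $\phi \mapsto \phi \circ f$ in one direction and fiberwise symmetric functions in the other.

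\emph{Corona on $R'$ implies corona on $R$.} Given corona data $\phi_1, \ldots, \phi_n \in H^\infty(R)$ of index $(n,\delta)$, the pullbacks $\phi_1 \circ f, \ldots, \phi_n \circ f$ are corona data on $R'$ of the same index, since $\max_j |\phi_j(f(q))| \ge \delta$ for every $q \in R'$. Solving the corona problem on $R'$ yields $\Psi_1, \ldots, \Psi_n \in H^\infty(R')$ with $\sum_j (\phi_j \circ f)\Psi_j = 1$. I would then push these down by the normalized trace: setting $G_j(p) = \frac{1}{m}\sum_{q \in f^{-1}(p)} \Psi_j(q)$, with the fiber counted with multiplicity, gives $G_j \in H^\infty(R)$ with $\|G_j\|_\infty \le \|\Psi_j\|_\infty$. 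Summing the identity $\sum_j \phi_j(f(q))\Psi_j(q)=1$ over the fiber of each $p$ and using $\phi_j(f(q)) = \phi_j(p)$ produces $\sum_j \phi_j(p) G_j(p) = 1$, so the $G_j$ solve the corona problem on $R$. The only point needing care is that the trace of a bounded analytic function is again bounded analytic, which follows from the removable singularity theorem across the discrete ramification locus.

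\emph{Corona on $R$ implies corona on $R'$.} This is the harder direction. Starting from corona data $F_1, \ldots, F_n \in H^\infty(R')$ of index $(n,\delta)$, I first manufacture corona data on $R$. For $\lambda = (\lambda_1, \ldots, \lambda_n) \in \mathbb{C}^n$ consider the fiberwise norm of the linear combination $\sum_j \lambda_j F_j$, namely $N_\lambda(p) = \prod_{q \in f^{-1}(p)}\big(\sum_j \lambda_j F_j(q)\big)$, which is a bounded analytic function on $R$ and a homogeneous polynomial of degree $m$ in $\lambda$. Writing $N_\lambda = \sum_{|\beta|=m} c_\beta\, \lambda^\beta$, the coefficients $c_\beta \in H^\infty(R)$ are the candidate corona data; there are $\binom{n+m-1}{m}$ of them. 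The key quantitative step is a lower bound: for each $p$ with fiber $q_1, \ldots, q_m$, each value vector $v_i = (F_1(q_i), \ldots, F_n(q_i))$ has Euclidean norm $\ge \delta$, and a measure argument on the unit sphere of $\mathbb{C}^n$ provides a single unit vector $\lambda_p$ avoiding the finitely many slabs $\{|\langle \lambda, v_i\rangle| < c\}$, with $c = c(m,n,\delta)>0$. Then $|N_{\lambda_p}(p)| \ge c^m$, and since $|\lambda_p|\le 1$ this forces $\max_\beta |c_\beta(p)| \ge \delta'$ for a constant $\delta' = \delta'(m,n,\delta)$; hence $\{c_\beta\}$ is corona data on $R$ of index $(\binom{n+m-1}{m}, \delta')$.

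The final step recombines a solution on $R$ into one on $R'$ by an algebraic identity. I factor the pullback of the norm as $N_\lambda \circ f = \big(\sum_j \lambda_j F_j\big)\, W_\lambda$, where $W_\lambda(q)$ is the product of $\sum_j \lambda_j F_j$ over the sheets other than $q$; this $W_\lambda$ is a degree-$(m-1)$ polynomial in $\lambda$ with coefficients $w_\gamma \in H^\infty(R')$, again analytic across ramification by removability. Comparing coefficients of $\lambda^\beta$ gives $c_\beta \circ f = \sum_j F_j\, w_{\beta - e_j}$, exhibiting each $c_\beta \circ f$ as an $H^\infty(R')$-combination of the $F_j$. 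Solving the corona problem for $\{c_\beta\}$ on $R$ gives $g_\beta \in H^\infty(R)$ with $\sum_\beta c_\beta g_\beta = 1$; pulling back and substituting the identity yields $\sum_j F_j\big(\sum_\beta w_{\beta - e_j}(g_\beta \circ f)\big) = 1$, so $G_j := \sum_\beta w_{\beta - e_j}(g_\beta \circ f)$ solve the corona problem on $R'$. I expect the main obstacle to be the lower-bound step: producing a uniformly good $\lambda_p$ and thereby a bound $\delta'$ for the symmetric-function data that is independent of $p$ and depends only on $(m,n,\delta)$. Controlling this constant, together with uniform bounds on $\|w_\gamma\|_\infty$, is precisely what makes the argument quantitative, which is what Theorem~\ref{T} ultimately requires.
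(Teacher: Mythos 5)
The paper never proves Theorem~\ref{TT}; it quotes it from Nakai \cite{Na} and adds only the remark that the argument survives for disjoint unions. So your proposal has to be judged on its own merits, and on those merits it is correct. The easy direction (pullback, solve upstairs, push down by the multiplicity-weighted trace, which is bounded analytic by Riemann removability across the discrete branch set) is the standard argument. In the hard direction, all the essential steps check out: $N_\lambda(p)=\prod_{q\in f^{-1}(p)}\bigl(\sum_j\lambda_jF_j(q)\bigr)$ (taken with multiplicities, as in your trace) is analytic in $p$ and a degree-$m$ homogeneous polynomial in $\lambda$; the sphere-measure choice of a unit $\lambda_p$ avoiding the $m$ slabs $\{|\langle\lambda,v_i\rangle|<c\}$ gives $|N_{\lambda_p}(p)|\ge c^m$ with $c=c(m,n,\delta)$ uniform in $p$, hence $\max_\beta|c_\beta(p)|\ge\delta'$; and the factorization $N_\lambda\circ f=\bigl(\sum_j\lambda_jF_j\bigr)W_\lambda$ with coefficient comparison $c_\beta\circ f=\sum_jF_j\,w_{\beta-e_j}$ recombines any solution for $\{c_\beta\}$ on $R$ into one for $\{F_j\}$ on $R'$. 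Two small points to tidy: the $c_\beta$ satisfy only $|c_\beta|\le n^m$, so they must be rescaled before they literally qualify as corona data of index $\bigl(\binom{n+m-1}{m},\,\delta'/n^m\bigr)$; and at a point $q$ of ramification order $k$ your $W_\lambda$ must carry the factor at $q$ with exponent $k-1$ (equivalently, define $W_\lambda$ off the discrete bad set $f^{-1}(\text{critical values})$ and extend by boundedness, or derive $W_\lambda\in H^\infty(R')$ algebraically from the fact that $h=\sum_j\lambda_jF_j$ satisfies its monic characteristic polynomial over the pulled-back algebra, which sidesteps removability for $W_\lambda$ entirely). Methodologically your route differs from Nakai's: \cite{Na} works on the Gelfand side, using the integral dependence of $H^\infty(R')$ on $f^\sharp H^\infty(R)$ to analyze the induced map $\mathcal{M}(R')\to\mathcal{M}(R)$ and show that density transfers, whereas yours is purely function-theoretic and constructive. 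That buys two things the citation leaves implicit: your proof is visibly local over $R$ and so applies verbatim to disjoint unions of surfaces, which is exactly the extension this paper asserts without proof after Theorem~\ref{TT}; and it is quantitative, bounding $C(n,\delta,R')$ in terms of $m$, $n$, $\delta$ and a corona constant of $R$, in the spirit of \cite{HN}. One caveat on your closing remark: the paper's proof of Theorem~\ref{T} does not actually need this quantitative strength --- it uses Theorem~\ref{TT} only qualitatively and funnels all uniformity through Gamelin's disjoint-union lemma (Theorem~\ref{TTT}) --- though having the bounds explicitly is a genuine bonus.
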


In Theorem~\ref{TT} Nakai considered only Riemann surfaces, that is, he considered
only \emph{connected} surfaces $R$ and $R'$. However, one
may check that the argument is still valid even when they are not
connected, i.e., in Theorem~\ref{TT} one can replace $R$ and $R'$ by disjoint unions of
Riemann surfaces.

Let $\mathbb{D}$ denote the unit disc. The next argument we will need for the proof of Theorem~\ref{T}
is the following.

\begin{theorem}[Ahlfors, \cite{Ahl}]\label{Ahl}
Suppose $R$ is a finitely bordered Riemann surface with $g(R) =g$ and $b(R) =b$.
Then there exists an $m$-sheeted branched covering map
$f:R \to \mathbb{D}$, called the \emph{Ahlfors map}, such that $b
\leq m \leq 2g + b$.
\end{theorem}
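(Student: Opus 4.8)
The plan is to construct a proper holomorphic map $f:R\to\mathbb D$ whose number of sheets $m$ satisfies $m\le 2g+b$, and then to read off the lower bound $m\ge b$ from the argument principle. Throughout I identify a proper holomorphic map $R\to\mathbb D$ with a bounded analytic $f$ that extends continuously to $\overline R$ with $|f|\equiv 1$ on $\partial R$ and $|f|<1$ in the interior; the number of sheets $m$ is then the number of zeros of $f$, counted with multiplicity.

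First I would fix an integer $m$ and points $z_1,\dots,z_m\in R$ (the prospective zeros) and form
\[
 U(z)=\sum_{i=1}^m g_R(z,z_i),
\]
where $g_R(\cdot,z_i)$ is the Green's function of $R$ (positive, harmonic off $z_i$, vanishing on $\partial R$, with a $-\log|z-z_i|$ singularity). The function $U$ is a candidate for $-\log|f|$: any analytic $f$ with $|f|=e^{-U}$ automatically satisfies $|f|\equiv 1$ on $\partial R$ and vanishes exactly at the $z_i$. Such an $f=\exp\bigl(-(U+i\widetilde U)\bigr)$, with $\widetilde U$ a conjugate harmonic function, exists and is single-valued precisely when the periods of the conjugate differential ${*}dU$ over a basis of $H_1(R;\mathbb Z)$ all lie in $2\pi\mathbb Z$. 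This is where the topology enters: a compact genus-$g$ surface with $b$ boundary curves has $b_1(R)=2g+b-1$, so there are exactly $2g+b-1$ period conditions, and each period is a sum $\sum_{i=1}^m \alpha_k(z_i)$ of values of a fixed harmonic ``period function'' $\alpha_k$ attached to the $k$-th homology class. The periods over the $b$ boundary curves are, up to sign, the boundary winding numbers of $f$.

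Then I would solve these $2g+b-1$ integrality conditions by choosing the configuration $z_1,\dots,z_m$, while simultaneously arranging that $f$ be \emph{proper}, i.e.\ that the winding number $n_j$ of $f$ along each boundary curve $C_j$ be a \emph{positive} integer. This is the crux and the step I expect to be the main obstacle: the handle-period relations, the integrality, and the positivity of all $b$ boundary windings must be met at once, and the whole point is to achieve this with as few singularities as possible. The threshold $m\le 2g+b=b_1(R)+1$ is exactly what one should expect, and the cleanest way to see it is to pass to the Schottky double $\widehat R$, a \emph{closed} surface of genus $\widehat g=2g+b-1$ carrying an anticonformal involution that fixes $\partial R$ and exchanges the two halves. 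Under this passage $f$ corresponds to a meromorphic function on $\widehat R$ that is symmetric with respect to the involution and has all of its poles confined to the mirror half (which forces $|f|\equiv1$ on $\partial R$). Since $2g+b=\widehat g+1$ is precisely the Riemann--Roch bound for the degree of a nonconstant meromorphic function on a closed surface of genus $\widehat g$, I would use this double picture to guarantee the existence of such a symmetric function of degree at most $\widehat g+1$; the invariance of the pole divisor under the involution is what keeps the reality of $f$ and the separation of its zeros from its poles compatible with the Riemann--Roch count.

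Finally, the lower bound is the easy half. Since $f$ is proper and nonconstant with $|f|\equiv1$ on $\partial R$, each boundary component must wrap around $\partial\mathbb D$ at least once, so $n_j\ge 1$; applying the argument principle to $f-w_0$ for a generic $w_0\in\mathbb D$ (where $|f|=1>|w_0|$ on $\partial R$, so $f-w_0$ has the same winding as $f$ along each $C_j$) gives $m=\sum_{j=1}^b n_j\ge b$. Combined with the construction this yields $b\le m\le 2g+b$, as claimed.
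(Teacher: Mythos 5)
The paper offers no proof of this statement at all: it is quoted verbatim from Ahlfors' 1950 paper, where the result is obtained by a genuinely different route than yours, namely an extremal problem (maximize $\operatorname{Re} f'(z_0)$ over the unit ball of $H^\infty(R)$; normal families give an extremal function, and a variational/duality argument pairing with analytic differentials shows the extremal function is unimodular on $\partial R$ with at most $2g+b$ zeros). Measured as a standalone proof, your proposal sets up a correct and standard framework but stops exactly at the theorem's core. In your Green's-function formulation, everything reduces to choosing $m\le 2g+b$ points $z_1,\dots,z_m$ so that all $2g+b-1$ periods of ${*}dU$ lie in $2\pi\mathbb Z$ while every boundary winding number stays positive --- and you say yourself that this is ``the crux and the step I expect to be the main obstacle.'' That simultaneous integrality-and-positivity problem \emph{is} the content of Ahlfors' theorem; nothing in the proposal indicates how to solve it, so as written this half is a restatement rather than a proof. (The lower bound $m\ge b$ via properness and the argument principle is correct and complete.)

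The Schottky-double route you offer as the ``cleanest way'' to the upper bound contains a concrete error in addition to being incomplete. Riemann--Roch on $\widehat R$ (genus $\widehat g=2g+b-1$) does produce, for any divisor $D$ of degree $\widehat g+1$, a nonconstant meromorphic $f$ with poles confined to $D$ --- but this is a complex-linear existence statement, whereas the condition you actually need, $f\circ\sigma=1/\overline{f}$ (which is what forces $|f|\equiv 1$ on $\partial R$), is not linear in $f$, and the symmetric functions do not form a linear subspace of $L(D)$; one cannot simply ``use this double picture to guarantee'' such an $f$. Moreover, your stated mechanism --- ``the invariance of the pole divisor under the involution'' --- is inconsistent with confining the poles to the mirror half: $\sigma$ exchanges the two halves, so a nonempty pole divisor inside the open mirror half is never $\sigma$-invariant; the symmetry instead forces $\sigma$ to carry the poles onto the zeros lying in $R$. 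Reconciling the Riemann--Roch count with this real structure is known to be genuinely delicate (it is the subject of later work of Gabard, who in fact improved the bound to $g+b$), not a routine step. To complete the argument you would need either Ahlfors' extremal-problem machinery or a careful real-linear-system argument on the double; the heuristic $2g+b=\widehat g+1$ matching the Riemann--Roch degree bound, while suggestive, proves nothing by itself.
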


The last ingredient of our recipe is the following statement:

\begin{theorem}\label{TTT}
Let $\{ R_j \}$ be a sequence of Riemann surfaces. Then
\[
 \sup_{j} C(n, \delta, R_j) < \infty
 \]
for every index $(n, \delta)$ if and only if the corona theorem holds for $\bigsqcup_{j} R_j$, the disjoint union of $R_j$.
\end{theorem}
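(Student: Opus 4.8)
The plan is to reduce the statement to the elementary description of $H^\infty$ on a disjoint union and then read both implications off two gluing constructions. The structural fact I would establish first is that
\[
 H^\infty\Bigl(\textstyle\bigsqcup_j R_j\Bigr) \;\cong\; \Bigl\{ (f_j)_j : f_j \in H^\infty(R_j),\ \sup_j \|f_j\|_\infty < \infty \Bigr\},
\]
the $\ell^\infty$-direct sum of the algebras $H^\infty(R_j)$, with norm $\|f\|_\infty = \sup_j \|f_j\|_\infty$; indeed a function on $\bigsqcup_j R_j$ is analytic exactly when each restriction $f_j := f|_{R_j}$ is analytic, and bounded exactly when these restrictions are uniformly bounded. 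Under this identification a tuple $F_1,\dots,F_n$ of corona data on $\bigsqcup_j R_j$ of index $(n,\delta)$ restricts, on each component, to corona data of the \emph{same} index, because the two-sided bound \eqref{bound} holds componentwise precisely when it holds on the union; conversely, solutions chosen on the separate components assemble into a global solution if and only if their norms are uniformly bounded in $j$. I expect this translation to carry essentially all the content, after which each direction is short.

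For the implication ``$\sup_j C(n,\delta,R_j)<\infty$ for every index $\Rightarrow$ corona theorem for $\bigsqcup_j R_j$'' I would proceed directly. Fix corona data $F_1,\dots,F_n$ of some index $(n,\delta)$ on the union and restrict it to each $R_j$. Writing $M := \sup_j C(n,\delta,R_j) < \infty$, the definition of the corona constant provides, for each $j$ and each $\varepsilon>0$, solutions $G_1^{(j)},\dots,G_n^{(j)} \in H^\infty(R_j)$ with $\sum_i F_i|_{R_j}\,G_i^{(j)} = 1$ and $\max_i \|G_i^{(j)}\|_\infty \le M+\varepsilon$. Using a single $\varepsilon$ for all $j$ and setting $G_i|_{R_j} := G_i^{(j)}$, the structural fact guarantees $G_i \in H^\infty(\bigsqcup_j R_j)$ with $\|G_i\|_\infty \le M+\varepsilon$, while $\sum_i F_i G_i = 1$ holds on each component, hence everywhere. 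Thus every corona datum on the union is solvable, which is the function-theoretic form of the corona theorem.

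For the reverse implication I would argue by contraposition, and this is where I expect the only real difficulty to lie. Suppose $\sup_j C(n,\delta,R_j) = \infty$ for some index $(n,\delta)$; the aim is to manufacture a single corona datum on $\bigsqcup_j R_j$ admitting no bounded solution. The clean case is when the blow-up is spread over infinitely many \emph{distinct} components, i.e. there are $j_1 < j_2 < \cdots$ with $C(n,\delta,R_{j_k}) > k$: then on each $R_{j_k}$ I pick corona data of index $(n,\delta)$ every solution of which has $\max_i\|G_i\|_\infty > k$, put the trivial data $F_1\equiv 1,\ F_2=\dots=F_n\equiv 0$ on the remaining components, and assemble a corona datum $F_1,\dots,F_n$ of index $(n,\delta)$ on the union. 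Any global solution would restrict to a solution on each $R_{j_k}$, forcing $\max_i\|G_i\|_\infty \ge \max_i \|G_i|_{R_{j_k}}\|_\infty > k$ for all $k$, hence $\max_i\|G_i\|_\infty = \infty$, contradicting $G_i \in H^\infty(\bigsqcup_j R_j)$; so the corona theorem fails for the union.

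The obstacle is precisely the reduction to this clean case. If some single component already carries a corona datum with \emph{no} solution, then extending it trivially to the union immediately defeats the corona theorem there; so one may assume every component is individually solvable. The delicate point is then to show that an infinite value of $\sup_j C(n,\delta,R_j)$ must be witnessed by distinct components along which the constants grow, so that the construction above applies — in other words, to rule out that the unboundedness concentrates on a single surface. Arranging this bookkeeping, rather than any further function theory, is the heart of the argument; once it is handled, the contradiction above closes the proof.
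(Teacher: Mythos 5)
Your first implication (uniform bounds imply the corona theorem for the union) is correct and complete: the $\ell^\infty$-description of $H^\infty\bigl(\bigsqcup_j R_j\bigr)$, the componentwise restriction of corona data, the uniform choice of $\varepsilon$, and the standard equivalence between density of the point evaluations in the maximal ideal space and solvability of all corona data are all sound. The ``clean case'' of your reverse implication is likewise correct, including the remark that a datum with no solution on one component extends trivially to a datum with no solution on the union.

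The gap is the case you defer to ``bookkeeping,'' and it cannot be handled by bookkeeping. If the blow-up of $\sup_j C(n,\delta,R_j)$ concentrates on a single component $R_{j_0}$ on which every datum is nevertheless solvable, your assembly trick is structurally unavailable: the witnesses are infinitely many \emph{different} data on the \emph{same} surface, a single corona datum on $\bigsqcup_j R_j$ restricts to only one datum on $R_{j_0}$, and data living on $R_{j_0}$ cannot be transplanted to the other (different) surfaces. Worse, ruling this case out is exactly the open question quoted in the paper's introduction: whether the corona theorem on a single Riemann surface forces $C(n,\delta,R)<\infty$. Indeed, if a surface $R_0$ existed with the corona theorem holding but $C(n,\delta,R_0)=\infty$, then for the sequence $R_1=R_0$ and $R_j=\mathbb{D}$ for $j\geq 2$ every corona datum on the union would be solvable (solve it on $R_0$ with a datum-dependent bound, on the discs with Carleson's uniform bound \cite{Ca}, and glue), so the corona theorem would hold for the union while $\sup_j C(n,\delta,R_j)=\infty$; hence no rearrangement of your construction can dispose of this scenario, and any complete proof of the stated equivalence must in particular rule out such an $R_0$ among the components. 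Note that the paper does not attempt your route: its entire proof of Theorem~\ref{TTT} is a citation to Lemma~3.1 of \cite{Gam}, with the remark that Gamelin's argument for planar domains carries over. Note also that in the paper's application only your clean case is ever invoked, since there $C(n_0,\delta_0,R_j)\to\infty$ along infinitely many \emph{distinct} components, so the part of the equivalence you did prove already suffices for the proof of Theorem~\ref{T}.
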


\begin{proof}
This theorem is essentially Lemma~3.1 of \cite{Gam}. In fact, in
\cite{Gam} the theorem was stated only for planar domains, but one can easily check that the
proof is valid for our case.
\end{proof}

Now we are ready to prove Theorem~\ref{T}.

\begin{proof}[Proof of Theorem~\ref{T}]
Suppose Theorem~\ref{T} is not true. Then there exist an index
$(n_0, \delta_0)$ and a sequence of finitely bordered Riemann surfaces
$\{ R_j \}$ with $g(R_j) = g $ and $b(R_j) = b$, $j=1,2,\ldots$,
such that
\begin{equation}\label{infty}
C(n_0, \delta_0, R_j) \to \infty
\end{equation}
as $j \to \infty$.
Furthermore by Theorem~\ref{Ahl}, we can find $m_j$-sheeted
branched covering maps $h_j : R_j \to D_j := \{ z: |z - 3j| < 1 \}$
with $b \leq m_j \leq 2g + b$ for all $j$.
However, by passing to a subsequence if necessary, we may assume that all the $m_j$'s are the same, that is, there 
exists a constant $m$ such that $m = m_j$ for all $j$.
Now let $D = \bigcup_j D_{j}$ and $\mathcal{R} = \bigsqcup_j R_{j}$.

Carleson's corona theorem for the unit disc \cite{Ca} (Theorem~\ref{T} for the case $g=0$ and $b=1$) implies that
$\sup_j C(n, \delta, D_{j} ) < \infty$ for any index $(n, \delta)$,
thus the corona theorem for $D$ follows from Theorem~\ref{TTT}. Then
by Nakai's theorem (Theorem~\ref{TT}), we see that the corona theorem also holds for $\mathcal{R}$,
because the map $h: \mathcal{R} \to D$ defined
by $h|_{R_{j}} = h_{j}$ is an $m$-sheeted branched covering.
According to Theorem~\ref{TTT}, however,  $\mathcal{R} = \bigsqcup_j R_{j}$ must have corona,
because $\sup_{j} C(n_0, \delta_0, R_j) = \infty$ by \eqref{infty}.
This contradiction completes the proof of Theorem~\ref{T}.
\end{proof}

We believe that what makes our proof significantly shorter than the proof of Hara and Nakai is the idea of applying
Hara's theorem (Theorem~\ref{TT}) to \emph{disjoint} sets, which is in fact due to Gamelin as Theorem~\ref{TTT} indicates.
The other parts of the proof is not very far from the original one
in the sense that both proofs use the Ahlfors maps and Nakai's theorem \cite{Na} (Theorems~\ref{Ahl} and \ref{TT} above).

\end{document}